\documentclass[12pt]{article}

\usepackage{amsmath}
\usepackage{amsthm}
\usepackage{amsfonts}
\usepackage{mathrsfs}
\usepackage{stmaryrd}
\usepackage{setspace}
\usepackage{fullpage}
\usepackage{amssymb}
\usepackage{breqn}
\usepackage{enumitem}
\usepackage{bbold}
\usepackage{authblk}
\usepackage{comment}
\usepackage{hyperref}
\usepackage{pgf,tikz}
\usepackage{graphicx}
\usepackage{subcaption}

\newtheorem{thm}{Theorem}[section]

\newtheorem{lemma}[thm]{Lemma}

\newtheorem{proposition}[thm]{Proposition}
\newtheorem{prop}[thm]{Proposition}

\newtheorem{clm}[thm]{Claim}

\newcommand\ex{\ensuremath{\mathrm{ex}}}

\newcommand\cF{{\mathcal F}}
\newcommand\cG{{\mathcal G}}
\newcommand\cH{{\mathcal H}}

\newcommand\cW{{\mathcal W}}

\newtheorem*{thm*}{Theorem}
\newtheorem*{prop*}{Proposition}
\newcommand{\ignore}[1]{}

\title{On the Tur\'{a}n number of the expansion of the book}
\author{
Xin Cheng\thanks{\small School of Mathematics and Statistics, Northwestern Polytechnical University and Xi'an-Budapest Joint Research Center for Combinatorics, Xi'an 710129, Shaanxi, P.R. China. Email:
\small \texttt{xincheng@mail.nwpu.edu.cn}.}\,, \hspace{0.2em}
D\'{a}niel Gerbner\thanks{\small Alfr\'ed R\'enyi Institute of Mathematics. Email:
\small \texttt{gerbner.daniel@renyi.hu}.}\,, \hspace{0.2em} 
Hilal Hama Karim$^\dagger$\thanks{\small Department of Computer Science and Information Theory, Faculty of Electrical Engineering and Informatics, Budapest University of Technology and Economics, Műegyetem rkp. 3., H-1111 Budapest, Hungary. E-mail: \texttt{hilal.hamakarim@edu.bme.hu}.}}

\date{}

\begin{document}

\maketitle

\begin{abstract}

 The book with $t$ pages is the graph on $t+2$ vertices consisting of $t$ triangles which intersect at exactly one common edge. For a given graph $F$, the $r$-expansion $F^r$ of $F$ is the $r$-uniform hypergraph obtained from $F$ by adding $r-2$ distinct new vertices to each edge of $F$. We determine the Tur\'an number of the 3-expansion of the book graph for sufficiently large $n$.

\end{abstract}

{\noindent{\bf Keywords}: Tur\'{a}n number, hypergraph, expansion}

{\noindent{\bf AMS subject classifications:} 05C35, 05C65}

\section{Introduction}


Given an $r$-uniform hypergraph $\cF$, its \textit{Tur\'an number} $\ex_r(n,\cF)$ is the maximum number of hyperedges in an $n$-vertex $r$-uniform hypergraph that does not contain $\cF$ as a subgraph. Tur\'an numbers are among the must studied and yet not well-understood problems in extremal Combinatorics, see \cite{keevash} for a survey. Much more is known in the case $r=2$, i.e., graph Tur\'an problems, see \cite{fursim} for a survey.

This motivates a recent trend where we define graph-based hypergraphs by enlarging graph edges, in the hope of applying graph-theoretic methods. In particular, the $r$-uniform \textit{expansion} $F^r$ of a graph $F$ is obtained from $F$ by adding $r-2$ distinct new vertices to each edge of $F$ (thus it has $(r-2)|E(F)|+|V(F)|$ vertices and $|E(F)|$ edges). Expansions were introduced by Mubayi \cite{Mu} for cliques, see \cite{MuVe} for a survey.

Some results of importance for us is $\ex_r(n,K_3)=\binom{n-1}{r-1}$ \cite{fra, MuVe2} and $\ex_3(n,F_t)=\binom{n}{3}-t\binom{n}{2}$ \cite{tfan} for sufficiently large $n$, where the $t$-fan $F_t$ consists of $t$ triangles sharing a vertex. Here we continue this line of research by determining the Tur\'an number of $B_t^3$ for sufficiently large $n$, where the book graph $B_t$ consists of $t$ triangles sharing an edge. Let us remark that $\ex_2(n,B_t)=\lfloor n^2/4\rfloor$ for sufficiently large $n$ by a theorem of Simonovits \cite{sim}.

\begin{thm}\label{thmnew1}
For any $t>1$, if $n$ is sufficiently large, then
  $\ex_3(n,B_t^3)=t\binom{n-t}{2}$. 
\end{thm}

In Section \ref{sec2}, we collect some lemmas. The proof of Theorem \ref{thmnew1} is in Section \ref{sec3}, and Section \ref{sec4} contains some concluding remarks.

\section{Preliminaries}\label{sec2}

For a book graph $B_s$ with vertex set $\{u,v, x_1, \ldots, x_s\}$ and edge set $\{uv, ux_i, v_i : i=1, \ldots, s\}$, we call the edge $uv$ the \textit{rootlet}, the edges $ux_i$ and $vx_i$ the \textit{page edges}, and the vertices $x_i$ the \textit{page vertices} of the book. A \textit{red-blue graph} is a graph where the edges have color red or blue or both. The \textit{red degree} $d_r(v)$ of a vertex $v$ is the number of red edges (including the edges that have both colors) incident to $v$, and the blue degree $d_b(v)$ is defined analogously. For simplicity we say $v$ is a red(respectively, blue) neighbor of $u$ if the edge $uv$ has color red (respectively, blue).  Our goal is to find \textit{rainbow cherries}, i.e., adjacent blue and red edges.

\begin{lemma}\label{lem}
    Let $G$ be a red-blue graph that contains a set $U$ of at least $4t+1$ vertices such that $d_r(u)\ge t$ and $d_b(u)\ge 2t$ for each $u\in U$. Then $G$ contains $t$ vertex-disjoint rainbow cherries.
\end{lemma}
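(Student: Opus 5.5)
The plan is to argue by maximality and then augment. Fix a family $\mathcal C$ of vertex-disjoint rainbow cherries in $G$ chosen with $|\mathcal C|=k$ as large as possible. Among all such families, choose one that maximizes the number of vertices of $U$ that are not covered. Suppose for contradiction that $k\le t-1$. Let $W$ be the set of vertices covered by $\mathcal C$, so $|W|=3k\le 3t-3$, and let $U'=U\setminus W$. Then $|U'|\ge 4t+1-3k\ge t+4$.

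\emph{Step 1: local structure of $U'$.} Take $u\in U'$. Since $\mathcal C$ is maximal, $u$ cannot have a red neighbor $x\notin W$ and a blue neighbor $y\notin W$ with $x\ne y$. Hence one of three cases holds:
\begin{itemize}[nosep]
\item all red neighbors of $u$ lie in $W$;
\item all blue neighbors of $u$ lie in $W$;
\item $u$ has exactly one neighbor outside $W$, and that edge may carry both colors.
\end{itemize}
The degree conditions $d_r(u)\ge t$ and $d_b(u)\ge 2t$ then give that $u$ has at least $t-1$ neighbors in $W$. In the first two cases we get at least $t$ red or at least $2t$ blue edges into $W$. In the last case we lose at most one red and one blue edge. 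The number of edges between $U'$ and $W$ is therefore at least $(t+4)(t-1)$. Moreover, whenever the first case fails, the red and blue neighborhoods of $u$ inside $W$ are each large, of size at least $t-1$ and at least $2t-1$ respectively.

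\emph{Step 2: pigeonhole onto a single cherry.} There are $k\le t-1$ cherries, so some cherry $C=(a,c,b)\in\mathcal C$ receives at least $(t+4)(t-1)/k>t+4$ edges from $U'$. Here $c$ is the center, $ca$ is red and $cb$ is blue. Each vertex of $U'$ sends at most $3$ edges to $C$, so at least $\lceil (t+4)/3\rceil$ distinct vertices of $U'$ are attached to $C$. With more care one can get two vertices $u,u'\in U'$ attached to $C$ in a useful configuration. The aim is to replace $C$ by two disjoint rainbow cherries on $\{a,b,c,u,u'\}\cup(\text{outside vertices})$, contradicting the maximality of $k$. For example, suppose $u$ is adjacent to $a$ by a blue edge. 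Then $u$–$a$–$c$ is a rainbow cherry centered at $a$. The freed vertex $b$, together with $u'$ and a neighbor of $u'$ outside $W$ or inside $b$'s neighborhood, would form the second cherry. If only one extra cherry cannot be formed, the secondary maximality gives the fallback: swap $C$ for a cherry that uncovers a vertex of $U$, which contradicts the choice of $\mathcal C$.

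\emph{Step 3: case analysis, the main obstacle.} The hard part is organizing the colors of the edges from $u,u'$ to $\{a,b,c\}$, including doubly colored edges, so that some swap always works. My first attempt would classify each $u\in U'$ by the pair (color, endpoint in $C$) of its edges to $C$. I would then observe that a red edge and a blue edge from $U'$ hitting $C$ at the same vertex $x$, coming from different vertices $u\neq u'$, immediately give the rainbow cherry $u$–$x$–$u'$. The remaining two vertices of $C$, plus the fact that $u$ or $u'$ still has many neighbors, should then yield a second cherry. Where this fails, all edges into a given vertex of $C$ are monochromatic. In that situation the large blue degree ($2t$ versus only about $3t$ total vertices in $W$) forces many blue edges from $U'$ into $W$. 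The pigeonhole argument then has to be refined to count red edges and blue edges separately, so that some cherry receives both. The slack in $|U|\ge 4t+1$ is what should make these counts work, and I expect getting the constants right there to be where the real effort lies.
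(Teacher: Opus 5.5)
Your Steps 2--3 are a plan rather than a proof, and the plan cannot be completed. Every contradiction you aim for comes from a \emph{single-cherry} swap: replacing one $C\in\mathcal C$ by two disjoint rainbow cherries inside $C\cup(V\setminus W)$, or by one cherry covering fewer vertices of $U$. These local conditions can all hold with $k=t-1$.

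Here is an example for $t\ge 5$. Take cherries $a_ic_ib_i$ for $1\le i\le t-1$, with $a_ic_i$ red and $c_ib_i$ blue, and no other edges inside $W$. Let $U$ be an independent set of $4t+1$ new vertices. Join each $u\in U$ to every $c_i$ by a two-coloured edge, to $a_1,b_1$ by red edges, and to $a_i,b_i$ ($i\ge 2$) by blue edges. Then:
\begin{itemize}
\item $d_r(u)=t+1$ and $d_b(u)=3t-5\ge 2t$;
\item $G-W$ is edgeless and $W\cap U=\emptyset$, so no cherry avoids $W$ and no swap can uncover a vertex of $U$;
\item inside $C_i\cup U$ every rainbow cherry passes through $c_i$. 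For $i=1$, $c_1$ carries the only blue edge at each $u$ and at $b_1$, and $a_1$ is monochromatic. For $i\ge 2$, $c_i$ carries the only red edge at each $u$ and at $a_i$, and $b_i$ is monochromatic.
\end{itemize}
So no single-cherry swap exists. This happens even though $c_i$ receives both colours from all of $U$, which shows your fallback dichotomy (``otherwise all edges into a vertex of $C$ are monochromatic'') is false. Yet $a_1u_1c_1$, $c_iu_ia_i$ $(2\le i\le t-1)$ and $b_1u_tb_2$ are $t$ disjoint rainbow cherries, several of which use vertices from two different old cherries. Hence no refinement of your pigeonhole and case analysis on one cherry can reach a contradiction.

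There are also two smaller slips. In Step~1, when the first case fails, the red neighbours of $u$ may all lie outside $W$, so they need not be large inside $W$. In Step~3, the second cherry cannot use the ``many neighbours'' of $u$ or $u'$, since both are already consumed by $u$--$x$--$u'$.

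The missing idea is to stop augmenting locally and rebuild the family globally. The paper uses single-cherry swaps only to clean up $U'$:
\begin{itemize}
\item At most two vertices of $U'$ have red neighbours outside $W$; removing at most one of them leaves $U''$ with no red edges inside it.
\item Three independent blue edges inside $U''$ are impossible. A red-edge count shows each old cherry receives at most three red edges from their endpoints, which forces a rainbow cherry outside $W$.
\item By Tutte--Berge (the two-blue-triangle case is excluded by a similar count), deleting at most three vertices leaves a set $U_0$ of $k+1$ vertices spanning no edge of either colour.
\end{itemize}
It then discards $\mathcal W$ entirely and finds $k+1$ cherries centred at $U_0$ via Hall's theorem. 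The bipartite graph has copies $u_r,u_b$ for each $u\in U_0$, joined respectively to the red and blue neighbours of $u$ in $V\setminus U_0$. Hall's condition is immediate from $d_r\ge t\ge k+1$ and $d_b\ge 2t\ge 2k+2$. In the example above, $U$ is already independent and this step finishes at once.
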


\begin{proof} Assume that the statement does not hold.
    Choose a largest family $\mathcal{W}$ of vertex-disjoint rainbow cherries and let $W$ denote the set of their vertices. Then $|W|=3k\le 3t-3$. Let $U'=U\setminus W$, thus $|U'|\ge t+4$. For each $u\in U'$, since we cannot extend our family of rainbow cherries with a rainbow cherry centered at $u$, either all the red neighbors of $u$ are in $W$, or all the blue neighbors of $u$ are in $W$, or $u$ has exactly one red neighbor and one blue neighbor outside $W$, and they are the same vertex $v$.

    Assume that there are two red edges $uv$, $u'v'$, such that $u,u'\in U'$, $u\neq u'$ and $v,v'\not\in W$. Then $u,u'$ have at least $2t-1$ blue neighbors in $W$. In particular, there is a rainbow cherry $v_1v_2v_3$ in $\mathcal{W}$ such that each of its three vertices is a blue neighbor of $u$. Then it is easy to see that neither of $v_1,v_2,v_3$ is a blue neighbor of $u'$, since any of those edges would create two vertex disjoint rainbow cherries on the vertices $v_1,v_2,v_3,u,u',v,v'$.
    Together with the other rainbow cherries in $\mathcal{W}$, they would form $k+1$ vertex-disjoint rainbow cherries, contradicting the maximality of $\mathcal{W}$. Let $p$ denote the number of rainbow cherries in $\mathcal{W}$ with three blue neighbors of $u$, and let $p'$ denote the number of rainbow cherries in $\mathcal{W}$ with three blue neighbors of $u'$, and assume without loss of generality that $p\le p'$. Then the number of blue neighbors of $u$ in $W$ is at most $3p+2(k-p-p')\le 2k\le 2t-2$, a contradiction.

    We have obtained that there are no two red edges as above. Therefore, there are at most two vertices of $U'$ that are incident to red edges with other endpoint not in $W$, and if there are two, say $u_1$ and $u_2$, then $u_1u_2$ must be a red edge and only one of its endpoints, say $u_1$ can have other red neighbors outside $W$. 
    Let $U''$ be obtained from $U'$ by deleting at most one vertex that has a red neighbor outside $W$. If there are two such vertices $u_1,u_2$ as above, we delete $u_1$.
    Then $U''\ge t+3$, and each vertex of $U''$ has at least $t-1$ red neighbors in $W$. 
    Moreover, there are no red edges inside $U''$. 

    Assume now that there are three independent blue edges $uv$, $u'v'$, $u''v''$ 
    inside $U''$. Observe that for a rainbow cherry $v_1v_2v_3$ in $\mathcal{W}$, altogether there are at most three red edges between these vertices and $u,u',u''$. Indeed, it is possible that one of $u,u',u''$ has three red neighbors in the cherry, or that each of $u,u',u''$ has one red neighbor, but if, say, $u$ has at least two red neighbors, then $u',u''$ has no red neighbors there.

    This implies that the number of red edges between $W$ and $u,u',u''$ is at most $3t-3$. 
    This implies that there is a red neighbor $x$ of $u$ or $u'$ or $u''$ outside $W\cup U''$. 
    Without loss of generality, say $x$ is a red neighbor of $u$. Then $xuv$ is a rainbow cherry outside $W$, a contradiction. This implies that there are no three independent blue edges inside $U''$. By the Tutte-Berge theorem \cite{berg}, either 
    there are two blue triangles inside $U''$, or there are three vertices covering all the blue edges inside $U''$. In the first case, for every cherry in $\cW$, at most one vertex is joined to any of the vertices of the triangles by a red edge. This implies that at most $6k$ red edges go from $W$ to the six vertices of the triangles. Those vertices are only incident to these less than $6t$ red edges, thus at least one of these vertices have red degree less than $t$, a contradiction. 
    In the second case, let $U'''$ be obtained from $U''$ by deleting these at most three vertices, then $|U'''|\ge t\ge k+1$ and there are no blue edges inside $U'''$. Let $U_0$ be an arbitrary set of $k+1$ vertices from $U'''$.

    We are going to find $k+1$ vertex-disjoint rainbow cherries with the vertices of $U_0$ as centers. We will define an auxiliary bipartite graph $G_0$. For each $u\in U_0$, we define vertices $u_r$ and $u_b$, these $2k+2$ vertices form part $A$. Part $B$ is $V(G)\setminus U_0$, and $u_r$ is joined to the red neighbors of $u$, $u_b$ is joined to the blue neighbors of $u$. Recall that each $u_r$ has degree at least $t$, since the red neighbors of $u$ are in $V(G)\setminus U_0$. Similarly, each $u_b$ has degree at least $2t$.
    A matching covering $A$ would create $k+1$ vertex-disjoint rainbow cherries in $G$, since the vertices matched to $u_r$ and $u_b$ form a rainbow cherry with $u$.

    If there is no such matching, then there is a set $A'\subseteq A$ violating Hall's condition, i.e., $A'$ has less than $|A'|$ neighbors in $B$. If $A'$ does not contain any $u_b$, then $|A'|\le k+1\le t$, but the neighborhood of $A'$ contains at least $t$ vertices, a contradiction. If $A'$ contains $u_b$ for some $u$, then $|A'|\le 2k+2\le 2t$ and the degree of $u_b$ is at least $2t$, a contradiction again. 

    We found a matching in $G_0$, thus a larger family of rainbow cherries in $G$, a contradiction completing the proof.
\end{proof}

A \textit{partial $r$-expansion} of a graph $F$ is obtained by taking a copy of $F$ and a subset of its edges, and adding $r-2$ distinct new vertices to those edges. We say that an edge $e$ of the shadow graph is \textit{$q$-heavy} in $\cH$ if there are at least $q$ hyperedges of $\cH$ that contain $e$. Otherwise, we call it \textit{$(q-1)$-light}. We will use the following lemma from \cite{tfan}.

\begin{lemma}[\cite{tfan}]\label{greedy}
    Assume that $\cH$ contains a partial $3$-expansion of $F$ and for each edge $e$ of that copy of $F$, if $e$ was not enlarged by a new vertex, then $e$ is $(|E(F)|+|V(F)|-2)$-heavy. Then $\cH$ contains a copy of $F^3$.
\end{lemma}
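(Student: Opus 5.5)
The plan is a greedy argument: I would enlarge the non-enlarged edges of the given partial $3$-expansion one at a time, always keeping a partial $3$-expansion of $F$ inside $\cH$. At every stage the copy of $F$ and its vertex set stay fixed, and the number of non-enlarged edges drops by one. After at most $|E(F)|$ steps no edge is left un-enlarged, and we have a copy of $F^3$ in $\cH$.

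For a single step, fix the current partial expansion. Its hyperedges are the sets $e\cup\{w_e\}$ for the enlarged edges $e$, where the $w_e$ are distinct and lie outside $V(F)$. Let $xy$ be an edge of $F$ that is not yet enlarged. By hypothesis $xy$ is $(|E(F)|+|V(F)|-2)$-heavy, so there are at least $|E(F)|+|V(F)|-2$ hyperedges $\{x,y,z\}\in\cH$, and these have pairwise distinct third vertices $z$. Some values of $z$ are forbidden:
\begin{itemize}
\item the $|V(F)|-2$ vertices of the copy of $F$ other than $x,y$;
\item the new vertices $w_e$ already used, at most $|E(F)|-1$ of them, since $xy$ itself is not enlarged.
\end{itemize}
This forbids at most $|E(F)|+|V(F)|-3$ values. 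Hence some hyperedge $\{x,y,z\}$ has an allowed $z$, and we enlarge $xy$ by $z$.

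It remains to check that $\{x,y,z\}$ is not already one of the hyperedges in use, so that the enlarged configuration really is a partial expansion with distinct hyperedges. Suppose an existing hyperedge $e\cup\{w_e\}$ contained both $x$ and $y$. Since $w_e\notin V(F)$, this forces $e=xy$, but $xy$ was not enlarged, a contradiction. Moreover, $z$ is distinct from all vertices already in the structure, so the new vertex sets stay disjoint as required.

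I do not expect any real obstacle. The only point needing care is this count of forbidden third vertices, including the observation that distinctness of the hyperedges comes for free once $z$ is new. Iterating the step completes the proof.
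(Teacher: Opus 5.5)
Your greedy argument is correct and essentially the standard proof of this lemma. The paper does not reprove it; it cites it from the fan paper and uses the same one-edge-at-a-time extension inline in the proof of Claim~\ref{claim3.1}, avoiding the $|V(F)|-2$ other vertices of $F$ and the at most $|E(F)|-1$ new vertices already chosen, just as you do.
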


The \textit{shadow graph} of a hypergraph $\cH$ contains the 2-element subsets of the hyperedges of $\cH$ as edges. For an edge, we call the vertices that extend it to a hyperedge of $\cH$ simply \textit{extending vertices}.
Let us assume that we are given a copy $B$ of $B_s$ in the shadow graph of $\cH$ and for each edge $e$ we are given an integer $w(e)$. We say that $B$ is \textit{nice} if each edge $e$ has at least $w(e)$ extending vertices outside $B$.

\begin{lemma}\label{lemi}
    Let us assume that for each $e$, $w(e)\le w$ and $q$ is sufficiently large with respect to $s$ and to $w$. Assume that we have a copy of $B_q$ in the shadow graph of $\cH$ such that each page edge is $(w(e)+1)$-heavy and the rootlet edge is $w(e)$-heavy. Then there is a nice $B_s$ inside the $B_q$.
\end{lemma}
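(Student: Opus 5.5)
We keep the rootlet $uv$ of the given $B_q$ and select $s$ of its $q$ page vertices. Then we check that every edge of the resulting $B_s$ has enough extending vertices lying outside it. The only vertices of the selected book that can destroy an extending vertex are the $s+2$ vertices of the book itself. So the task is to choose the page vertices so that few of them extend the relevant edges.

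\emph{The rootlet.} The edge $uv$ has at least $w(uv)$ extending vertices in total. At most $w$ of them are "used up" if we simply avoid choosing them as page vertices. Let $X$ be the set of page vertices of the $B_q$. Delete from $X$ any vertex among the first $w(uv)$ extending vertices of $uv$. This leaves at least $q-w$ candidates. Since $u,v$ are not extending vertices of $uv$, the rootlet keeps $w(uv)$ extending vertices outside any $B_s$ whose page vertices avoid the deleted ones.

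\emph{The page edges.} For each candidate $x$, the edge $ux$ is $(w(ux)+1)$-heavy. Its extending vertices include at most one of $u,v,x$, namely $v$, since $u$ and $x$ are endpoints. So $ux$ has at least $w(ux)$ extending vertices other than $v$. Fix a set $E_x$ of $w(ux)\le w$ such vertices, and similarly a set $E'_x$ of $w(vx)$ extending vertices of $vx$ other than $u$.

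Now form an auxiliary digraph on the candidate set, with an arc $x\to y$ whenever $y\in E_x\cup E'_x$. Every out-degree is at most $2w$. Hence the underlying undirected graph has average degree at most $4w$, and by Turán's theorem (or greedy colouring of a $4w$-degenerate graph) it contains an independent set of size at least $(q-w)/(4w+1)$. Choosing $q$ large enough makes this at least $s$. Take $s$ page vertices from such an independent set. For each chosen $x$, the sets $E_x$ and $E'_x$ then avoid all other chosen page vertices and also avoid $u,v$, so the resulting $B_s$ is nice.

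The main (mild) obstacle is making sure the heaviness difference of one between page edges and the rootlet is exactly what absorbs the opposite endpoint of the rootlet. This works because $v$ may extend $ux$, and the $+1$ in the hypothesis covers that case. Everything else is a degeneracy/independent-set counting argument.
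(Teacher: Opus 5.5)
Your proof is correct and follows the paper's argument essentially step for step: you reserve $w(uv)$ extending vertices of the rootlet and remove them from the page candidates, reserve $w(e)$ extending vertices of each page edge that avoid the opposite rootlet vertex (which is exactly what the $+1$ in the heaviness hypothesis is for), and take $s$ page vertices from an independent set of the auxiliary ``is picked for'' graph, where the paper uses Caro--Wei and you use degeneracy. Your count, out-degree at most $2w$ and hence an independent set of size at least $(q-w)/(4w+1)$, is in fact the accurate one; the paper's stated bound of $(w+1)|V(G_0)|$ edges should really be $2w|V(G_0)|$, though this is immaterial because $q$ is taken sufficiently large.
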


\begin{proof}
    Let $vv'$ be the rootlet edge of $B_q$. For each page edge, we pick $w(e)$ vertices distinct from $v,v'$ that extend them to a hyperedge of $\cH$. For the rootlet edge $vv'$ we pick $w(vv')$ such vertices.
     
     Now we define an auxiliary graph $G_0$. Its vertices are the page vertices of the $B_q$ that have not been picked for $vv'$. Two vertices $u,u'$ are joined by an edge if and only if either $u$ is picked for a page edge containing $u'$ or $u'$ is picked for a page edge containing $u$. 
     
     By definition, $G_0$ has at most $(w+1)|V(G_0)|$ edges, thus average degree at most $2w+2$. By the Caro-Wei theorem \cite{caro,wei}, $G_0$ contains an independence set of size at least $|V(G_0)|/(2w+3)\ge s$. We pick an independent set of size $s$ to be the set of page vertices of a $B_s$ with rootlet vertices $v,v'$. This $B_s$ is nice since it does not contain any of the $w(e)$ vertices picked for $e$. Indeed, such a vertex $u$ is not a rootlet vertex and it would be joined in $G_0$ to the page vertex that is an endpoint of $e$, contradicting the fact that the page vertices form an independent set in $G_0$.
\end{proof}

\section{Proof of Theorem \ref{thmnew1}}\label{sec3}

A construction for the lower bound is given below.
Let $X$ be a vertex set of order $t$ and $Y$ be a vertex set of order $n-t$.
Then add all triples which contain one vertex from $X$ and two vertices from $Y$ as hyperedges.
This hypergraph is $B_t^3$-free and has $t \binom{n-t}{2}$ hyperedges.

Let us turn to the upper bound. Let $\cH$ be a $B_t^3$-free hypergraph.
The authors together with Zhou \cite{tfan} proved that for any $t>1$, if $n$ is sufficiently large, then
  $\ex_3(n,F_t^3)=\binom{n}{3}-\binom{n-t}{3}=(1+o(1))t\binom{n}{2}$. We start by following the main idea of their proof. 
  We define the following sets of edges and hyperedges.

 Let $E_1$ denote the set of edges that are $t$-light, i.e., are contained in at most $t$ hyperedges, $E_2$ the set of $(t+1)$-heavy, $2t$-light edges, $E_3$ the set of $(2t+1)$-heavy, $3t$-light edges and $E_4$ the set of $(3t+1)$-heavy edges. For $1\le i \le 3$, let $\cH_i$ denote the set of hyperedges that contain at least $i$ edges of $E_i$, let $\cH_4$ denote the set of hyperedges that contain an edge of $E_2$ and two edges of $E_3$, and let $\cH'$ denote the set of other hyperedges in $\cH$. Let $e_i=|E_i|$ and $h_i=|\cH_i|$.


We partition $\cH'$ to several parts. Let $\cH_5$ denote the set of hyperedges in $\cH'$ that contain an edge of $E_2$, an edge of $E_3$ and an edge of $E_4$, $\cH_6$ denote the set of hyperedges in $\cH'$ that contain an edge of $E_2$ and two edges of $E_4$, and $\cH_7$ denote the set of other hyperedges in $\cH'$.



  Next we show that $(2t+1)$-heavy edges do not form large books. This will imply that there are $o(n^2)$ hyperedges where each subedge is $(2t+1)$-heavy.

 \begin{clm}\label{claim3.1}
     $h_7+h_3=o(n^2)$. 
 \end{clm}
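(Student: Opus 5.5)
The hyperedges in question all live in triangles of the shadow graph made of $(2t+1)$-heavy edges. I would show that this graph contains no large book, and then use the triangle removal lemma to count those triangles.

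\emph{Step 1: reduction to heavy triangles.} A hyperedge of $\cH_3$ has all three of its 2-subsets in $E_3$. A hyperedge of $\cH_7$ lies in $\cH'$, so it is not in $\cH_1$ and has no edge of $E_1$. It is also not in $\cH_2\cup\cH_4\cup\cH_5\cup\cH_6$. I would check from the definitions that these conditions rule out any edge of $E_2$. So in both cases every 2-subset of the hyperedge is $(2t+1)$-heavy. Let $G$ be the graph of $(2t+1)$-heavy edges. Every hyperedge of $\cH_3\cup\cH_7$ spans a triangle of $G$, and distinct hyperedges give distinct triangles. Hence $h_3+h_7$ is at most the number of triangles in $G$.

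\emph{Step 2: $G$ contains no $B_q$ for a suitable constant $q=q(t)$.} Suppose $G$ contains $B_q$. Set $w(e)=2t$ for the page edges and $w(e)=2t+1$ for the rootlet. Every edge of $G$ is $(2t+1)$-heavy, so the hypotheses of Lemma \ref{lemi} hold with $w=2t+1$. For $q$ large in terms of $t$, the lemma gives a nice $B_t$.

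Now enlarge the edges greedily with distinct new vertices, always choosing extending vertices outside the $B_t$.
\begin{itemize}
\item Treat the $2t$ page edges first. When the $k$-th page edge is handled, only $k-1\le 2t-1$ vertices are already used, and it has $2t$ extending vertices outside the book, so a free one exists.
\item Treat the rootlet last. At most $2t$ vertices are used, and it has $2t+1$ extending vertices outside the book.
\end{itemize}
This produces a copy of $B_t^3$ in $\cH$, a contradiction. (Lemma \ref{greedy} does not apply directly, since it would require $(3t+1)$-heaviness; the niceness from Lemma \ref{lemi} replaces it.)

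\emph{Step 3: counting triangles.} Since $G$ is $B_q$-free, every edge of $G$ lies in fewer than $q$ triangles. So $G$ has $T\le qn^2/3=o(n^3)$ triangles. By the triangle removal lemma, all triangles of $G$ can be destroyed by deleting $o(n^2)$ edges. Each deleted edge lies in fewer than $q$ triangles, so $T\le q\cdot o(n^2)=o(n^2)$. Together with Step 1 this gives $h_3+h_7=o(n^2)$.

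The main obstacle is Step 2. Heaviness $2t+1$ is too weak for Lemma \ref{greedy}, so the argument depends on the asymmetric choice of $w(e)$ in Lemma \ref{lemi} together with processing the rootlet last. The remaining steps are standard.
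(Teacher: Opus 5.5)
Your proof is correct and follows the paper's argument. It uses the same weights $w(e)=2t$ on the pages and $w(vv')=2t+1$ on the rootlet in Lemma~\ref{lemi}, the same greedy extension with the rootlet handled last, and then counts triangles in a book-free graph; the only differences are cosmetic (you work in the graph of all $(2t+1)$-heavy edges rather than the shadow of $\cH_7\cup\cH_3$, and you re-derive via the removal lemma the Alon--Shikhelman bound that $B_q$-free graphs have $o(n^2)$ triangles instead of citing it).
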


 \begin{proof} Let $s$ be sufficiently large.
     Assume that there is a $B_s$ in the shadow graph $G'$ of $\cH_7\cup \cH_3$. Recall that each of the edges is $(2t+1)$-heavy. Let $vv'$ be the rootlet edge. For each page edge $e$ of this $B_s$, we let $w(e)=2t$.
     For the rootlet edge we let $w(vv')=2t+1$.
     
     

We apply Lemma \ref{lemi} to obtain a nice copy $B$ of $B_t$. Then we go through the page edges one by one and choose one of the $2t$ extending vertices outside $B$ to extend it to a hyperedge. We can do this such that we choose distinct vertices, since we have $2t$ choices. Finally, we choose an extending vertex for $vv'$. We have to avoid the $2t$ vertices chosen to extend the page edges, and we have $2t+1$ choices, thus this is also doable.

     We have proved that $G'$ is $B_s$-free. Each hyperedge of $\cH_7\cup \cH_3$ induces a triangle in $G'$, thus $h_7+h_3$ is at most the number of triangles in $G'$. By a theorem of Alon and Shikhelman \cite{alon}, an $n$-vertex $B_s$-free graph contains $o(n^2)$ triangles, completing the proof of the claim. 
 \end{proof}

 \begin{clm}\label{claim3.2}
     $h_1+h_2+h_3+7h_4/6+13h_5/12+h_6\le t(e_1+e_2+e_3+e_4)\le t\binom{n}{2}$.
 \end{clm}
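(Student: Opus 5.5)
This is a discharging (weight-counting) argument: each hyperedge distributes charge to its light subedges, and we check that no edge of $E_i$ receives more than $t$.

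\textbf{Setting up the charges.} Each edge of $E_1$ lies in at most $t$ hyperedges, each of $E_2$ in at most $2t$, and each of $E_3$ in at most $3t$. A hyperedge in $\cH_1,\dots,\cH_6$ sends weight to its subedges in $E_1\cup E_2\cup E_3$:
\begin{itemize}[label={}]
\item weight $1$ to an edge of $E_1$ it contains;
\item weight $1/2$ to each edge of $E_2$;
\item weight $1/3$ to each edge of $E_3$.
\end{itemize}
Since edges of $E_4$ receive nothing, each edge of $E_i$ with $i\le 3$ receives total weight at most $t$. Summing over edges then bounds $\sum_{h}\mathrm{wt}(h)$ by $t(e_1+e_2+e_3)\le t(e_1+e_2+e_3+e_4)\le t\binom n2$.

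\textbf{Checking each class.} We need each hyperedge to send at least its coefficient in the claimed inequality.
\begin{itemize}[label={}]
\item $h\in\cH_1$ sends at least $1$, from its $E_1$ edge.
\item $h\in\cH_2$ contains two $E_2$ edges and sends at least $1$.
\item $h\in\cH_3$ contains three $E_3$ edges and sends $1$.
\item $h\in\cH_4$ sends $1/2+2/3=7/6$.
\item $h\in\cH_5$ sends $1/2+1/3=5/6$, which is short of the required $13/12$.
\item $h\in\cH_6$ sends only $1/2$, short of the required $1$.
\end{itemize}
Overlaps between classes are harmless, provided we count each hyperedge once with the largest coefficient it needs. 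Classes $\cH_5,\cH_6$ lie in $\cH'$, so they are disjoint from $\cH_1,\dots,\cH_4$. The naive uniform split therefore fails for $\cH_5$ and $\cH_6$. Also, weights must be assigned so that each edge's total stays $\le t$ rather than merely averaging.

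\textbf{Refining the weights with the $B_t^3$-freeness.} This is the main obstacle. The plan is to let weights depend on the edge. For an edge $e\in E_2$, consider the hyperedges containing it, at most $2t$ of them. Each has a third vertex, and the other two subedges are in $E_1,E_2,E_3,E_4$.

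The key observation is that if $e=uv$ lies in many hyperedges $uvx$ where both $ux$ and $vx$ are $(3t+1)$-heavy (the $\cH_6$-type), then these give a book with rootlet $uv$. If there were at least $t$ such page vertices, then Lemma \ref{greedy} would embed $B_t^3$. Here the relevant quantity is $|E(B_t)|+|V(B_t)|-2=3t+1+t$, so one must be careful. Instead, the rootlet $uv$ itself is enlarged by one of its $\ge t+1$ extending vertices, and the page edges are heavy enough to be enlarged greedily, avoiding at most $3t$ used vertices plus the $t+2$ book vertices. I would do this greedy embedding directly, as in Claim \ref{claim3.1}.

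Hence an $E_2$ edge lies in at most $t-1$ hyperedges of $\cH_6$ type. In general, the number of hyperedges through $e$ whose other edges are both heavy is less than $t$. So an $E_2$ edge can afford to give weight $1$ to each of its (fewer than $t$) $\cH_6$ and $\cH_5$ hyperedges and $1/2$ to the rest, and it still stays within $t$ provided its total count is at most $2t$. A similar count is needed for $E_3$ edges with $\cH_5$ partners, giving them more than $1/3$. The constant $13/12$ suggests a weight split like $1/2+7/12$ or $3/4+1/3$. I would solve the resulting small linear system, bounding via the book argument the number of "heavy-sided" hyperedges through each light edge, to pick weights making every edge's load $\le t$.
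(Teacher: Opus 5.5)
Your uniform weights $1,\tfrac12,\tfrac13$ on $E_1,E_2,E_3$ are correct; they are exactly the paper's first three pair counts. But the proof stops where the real work begins. You never give weights that cover the deficits of $\tfrac14$ for $\cH_5$ and $\tfrac12$ for $\cH_6$, and the one concrete refinement you state is false. Suppose $e\in E_2$ gives $1$ to each of its at most $t-1$ hyperedges in $\cH_5\cup\cH_6$ and $\tfrac12$ to each of its remaining (up to $t+1$) hyperedges in $\cH_2\cup\cH_4$. Its load can then reach $(t-1)+\tfrac{t+1}{2}=\tfrac{3t-1}{2}>t$. There is no hidden slack: a hyperedge of $\cH_6$ has no light subedge other than its $E_2$ edge, so if $E_4$ edges receive nothing it must draw its full unit from that single edge. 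Your bound of $t-1$ such hyperedges per $E_2$ edge is correct. (It even holds for $\cH_5\cup\cH_6$ together if you extend the $E_3$ page edges first. Also, $|E(B_t)|+|V(B_t)|-2=3t+1$, not $4t+1$, so Lemma~\ref{greedy} gives the $\cH_6$ case directly.) However, that bound does not prevent the overload, and you give no argument that redistributing among the $\cH_2$ hyperedges can absorb it.

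The missing idea is the term $te_4$ on the right-hand side, which your scheme discards. In effect you are trying to prove the stronger bound $t(e_1+e_2+e_3)$, for which you have no argument. The paper lets each edge of $E_4$ absorb $\tfrac14$ from every hyperedge of $\cH_5\cup\cH_6$ containing it. A hyperedge of $\cH_5$ contains one $E_4$ edge and one of $\cH_6$ contains two, so the totals become $\tfrac12+\tfrac13+\tfrac14=\tfrac{13}{12}$ and $\tfrac12+\tfrac12=1$; this amounts to $h_5+2h_6\le 4te_4$.

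The local fact needed is that an edge $vv'\in E_4$ lies in at most $4t$ hyperedges of $\cH_5\cup\cH_6$, and here $B_t^3$-freeness must be used non-greedily. Take $vv'$ as the rootlet and the third vertices $u$ as candidate page vertices. One page edge at $u$ is only $(t+1)$-heavy, and one of its extenders is the other rootlet vertex, so greedily choosing distinct extenders can fail. Instead, record the extenders of the $(2t+1)$-heavy and $(t+1)$-heavy page edges at $u$ as blue and red edges in an auxiliary graph on $V(\cH)\setminus\{v,v'\}$. If there are at least $4t+1$ such $u$, Lemma~\ref{lem} gives $t$ vertex-disjoint rainbow cherries. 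Their centres serve as page vertices, their leaves as distinct extenders of the page edges, and the $(3t+1)$-heavy rootlet is extended last.

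Neither the charging to $E_4$ nor the rainbow-cherry lemma appears in your plan, and without them you do not reach the coefficients $\tfrac{13}{12}$ and $1$.
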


 \begin{proof} We will count the number of pairs $(e,h)$ where $e$ is an edge in $E_i$ and $h$ is a hyperedge in some of the hypergraphs $\cH_j$.
     Each edge in $E_1$ is contained in at most $t$ hyperedges of $\cH_1$, thus for $i=1$ and $h\in \cH_1$, the number of pairs $(e,h)$ is at most $te_1$. On the other hand, each hyperedge in $\cH_1$ contains at least one edge in $E_1$, thus the number of pairs $(e,h)$ is at least $h_1$. This shows that $h_1\le te_1$.

     For $e\in E_2$, we consider each $\cH_j$ that has edges from $E_2$ in its definition, i.e., $h\in \cH_2\cup \cH_4\cup\cH_5\cup \cH_6$. Any edge of $E_2$ is contained in at most $2t$ hyperedges, thus the number of pairs $(e,h)$ is at most $2te_2$. On the other hand, each hyperedge in $\cH_2$ contains at least two edges in $E_2$ and each hyperedge in $\cH_4\cup \cH_5\cup\cH_6$ contains one edge of $E_2$, thus the number of pairs $(e,h)$ is at least $2h_2+h_4+h_5+h_6$. This shows that $h_2+h_4/2+h_5/2+h_6/2\le te_2$.

For $e\in E_3$, we consider each $\cH_j$ that has edges from $E_3$ in its definition except for $\cH_7$, i.e., $h\in \cH_3\cup \cH_4\cup \cH_5$. Any edge of $E_3$ is contained in at most $3t$ hyperedges, thus the number of pairs $(e,h)$ is at most $3te_3$. On the other hand, each hyperedge in $\cH_3$ contains three edges in $E_3$, each hyperedge in $\cH_4$ contains two edges of $E_3$ and each hyperedge in $\cH_5$ contains an edge of $E_3$, thus the number of pairs $(e,h)$ is at least $3h_3+2h_4+h_5$. This shows that $h_3+2h_4/3+h_5/3\le te_3$.

     It is left to show that $h_5/4+h_6/2\le te_4$, i.e., $\frac{3}{2} h_5+3h_6\le 6te_4$. Given an edge $e\in E_4$, let $p(e)$ denote the number of hyperedges of $\cH_5$ containing $e$ and $q(e)$ denote the number of hyperedges of $\cH_6$ containing $e$. Then $\sum_{e\in E_4} p(e)=h_5$ and $\sum_{e\in E_4} q(e)=2h_6$.

We claim that for any $e=vv'\in E_4$ we have $p(e)+q(e)\le 4t$. Assume otherwise. Let $U$ be the set of vertices that extend $e$ to a hyperedge of $\cH_5\cup \cH_6$. We define an auxiliary red-blue graph on $V(\cH)\setminus \{v,v'\}$. Recall that for each $u\in U$, one of $vu$ and $v'u$ is $(2t+1)$-heavy and the other is $(t+1)$-heavy. Let us assume that $v_0u$ is $(2t+1)$-heavy and $v_1u$ is $(t+1)$-heavy, where $\{v,v'\}=\{v_0,v_1\}$. We join $u\in U$ to $w\in V(\cH)\setminus \{v,v'\}$ with a blue edge if $w$ is one of the at least $2t+1$ vertices that extend $v_0u$ to a hyperedge of $\cH$ and by a red edge if $w$ is one of the at least $t+1$ vertices that extend $v_1u$ to a hyperedge of $\cH$. Note that $vv'u$ may be a hyperedge of $\cH$, but all the other hyperedges containing $u$ and one of $v$ and $v$ create edges in this red-blue graph. Therefore, each vertex of $U$ is incident to at least $t$ red edges and at least $2t$ blue edges, hence we can apply Lemma \ref{lem}. Then we find $t$ rainbow cherries, and we use them to construct a $B_t^3$. The rootlet vertices are $v$ and $v'$, and the page vertices are the centers of the cherries. One of the red and blue neighbors of a vertex $u$ extend $vu$ to a hyperedge of $\cH$ and the other extends $v'u$. Finally, the rootlet edge is in $E_4$, thus has at least $3t+1$ extending vertices. We can pick one that has not been used to complete the $B_t^3$ and find a contradiction, proving that $p(e)+q(e)\le 4t$.

Using the above results, we have $h_5+2h_6=\sum_{e\in E_4} p(e)+\sum_{e\in E_4} q(e)\le 4te_4$. This implies that $3h_5/2+3h_6\le 6te_4$. Combining the inequalities we have obtained, $h_1+h_2+h_3+7h_4/6+13h_5/12+h_6\le t(e_1+e_2+e_3+e_4)\le t\binom{n}{2}$.
 \end{proof}

Combining the above claims, we obtain the upper bound

\begin{equation}\label{equ}
   |\cH|= h_1+h_2+7h_4/6+13h_5/12+h_6+h_3+h_7\le t(e_1+e_2+e_3+e_4)+o(n^2)\le t\binom{n}{2}+o(n^2).
\end{equation}
Assume now that $\cH$ has $\ex(n,B_t^3)$ edges. Then in the above calculations, each bound is sharp apart from an additive term $o(n^2)$. In particular, $h_4=o(n^2)$ and $h_5=o(n^2)$. Moreover, we claim that all but $o(n^2)$ edges of $E_2$ are $2t$-heavy. Indeed, otherwise in the proof of Claim \ref{claim3.2} the number of pairs $(e,h)$ with $e\in E_2$, $e\subset h$ is upper bounded by $2te_2-\Omega(n^2)$, which implies that (\ref{equ}) changes to $|\cH|\le t\binom{n}{2}-\Omega(n^2)+o(n^2)$ and we are done. 

\begin{clm}\label{claim3.3}
    $h_6=o(n^2)$.
\end{clm}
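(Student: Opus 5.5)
The plan is to argue by contradiction from the sharpness of the $E_4$-part of Claim \ref{claim3.2}. We may assume $\cH$ is extremal, so every inequality in the proof of Claim \ref{claim3.2} is tight up to $o(n^2)$, and we already know $h_5=o(n^2)$. Suppose $h_6\ge cn^2$ for some $c>0$. Every hyperedge of $\cH_6$ contains two edges of $E_4$. Hence $2h_6=\sum_{e\in E_4}q(e)$, and since $q(e)\le 4t$ we get $e_4\ge h_6/(2t)=\Omega(n^2)$. The inequality $h_5+2h_6\le 4te_4$ was used with slack exactly $\sum_{e\in E_4}(4t-p(e)-q(e))$. Tightness therefore forces all but $o(n^2)$ edges $e\in E_4$ to satisfy $p(e)+q(e)=4t$, and because $h_5=o(n^2)$, even $q(e)=4t$.

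So it suffices to prove the stronger bound $q(e)\le 4t-1$ for every $e\in E_4$, or at least $p(e)+q(e)\le 4t-1$ for all but $o(n^2)$ such edges. That would give $2h_6\le (4t-1)e_4+o(n^2)$. The slack $e_4=\Omega(n^2)$ then propagates into (\ref{equ}) and gives $|\cH|\le t\binom n2-\Omega(n^2)$, contradicting extremality.

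To prove $q(e)\le 4t-1$, take $e=vv'\in E_4$ and let $U$ be the set of $4t$ vertices $u$ with $vv'u\in\cH_6$. For such $u$, one of $vu,v'u$ lies in $E_4$ and the other in $E_2$. The gain over the $\cH_5$ situation is that the edge $v_0u$ is now $(3t+1)$-heavy rather than only $(2t+1)$-heavy. Building the same red-blue graph as in the proof of Claim \ref{claim3.2}, each $u\in U$ therefore has red degree at least $t$ and blue degree at least $3t$ (after discarding $v,v'$ and the hyperedge $vv'u$). I would then rerun the proof of Lemma \ref{lem} with these parameters and $|U|\ge 4t$, aiming to still obtain $t$ vertex-disjoint rainbow cherries. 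The cherries, together with a fresh extending vertex for $vv'$ (there are at least $3t+1$ of these, so one is unused), give a $B_t^3$ exactly as in Claim \ref{claim3.2}, which is a contradiction.

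The main obstacle is this modified Lemma \ref{lem}, specifically the accounting step where $|U'|\ge t+4$ drops to $t+3$. I expect the extra blue degree to compensate.
\begin{itemize}
\item In the "two red edges outside $W$" step, $u$ would need at least $3t-1$ blue neighbours in $W$. But at most $3p+2(k-p-p')\le 2k\le 2t-2$ are available, so this step has even more room than before.
\item In the final Hall's-condition step, the degree of $u_b$ is at least $3t>2k+2$, so Hall's condition holds easily.
\item The only delicate points are the counts $|U''|\ge t+2$ and $|U'''|\ge t-1$, which must still be at least $k+1$. Since $k\le t-1$ this holds.
\item The Tutte-Berge case with two blue triangles is unaffected, since it only uses red degrees.
\end{itemize}

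If this bookkeeping fails by one vertex, the fallback is to use the tightness of the $E_2$-count. All but $o(n^2)$ edges of $E_2$ are exactly $2t$-heavy, and all of their hyperedges lie in $\cH_2\cup\cH_6$ up to $o(n^2)$. This gives extra red degree, at least $2t-1$, for most $u$, and the modified Lemma \ref{lem} then goes through with a margin.
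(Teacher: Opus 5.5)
Your reduction is sound, and it takes a genuinely different route from the paper's. The paper first discards the $o(n^2)$ hyperedges of $\cH_6$ whose $E_2$-edge is $(2t-1)$-light. It then shows, by a Caro--Wei/greedy embedding (as in Lemma~\ref{lemi}, with Lemma~\ref{greedy} for the two $(3t+1)$-heavy edges), that the shadow of the remaining hyperedges contains no large book $B_s$, and it finishes with the Alon--Shikhelman bound on triangles in $B_s$-free graphs. You instead sharpen the local bound $p(e)+q(e)\le 4t$ from Claim~\ref{claim3.2} to $q(e)\le 4t-1$, and then use stability of the weighted double count. Since $e_4\ge h_6/(2t)$ and $h_5=o(n^2)$, the $E_4$-inequality would have slack at least $(e_4-h_5)/4=\Omega(n^2)$ whenever $h_6=\Omega(n^2)$, contradicting extremality. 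This removes the need for the $\cH_6'/\cH_6''$ split and for a second appeal to Alon--Shikhelman. The price is a stronger rainbow-cherry lemma, which you have to actually prove.

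Your write-up of that lemma has an error, exactly at the step you flagged. From $|U'|\ge 4t-3k$, deleting one vertex for $U''$ and three for $U'''$ leaves $4t-3k-4$, which is at least $k+1$ only if $k\le t-2$. For $k=t-1$ you get $|U'''|\ge t-1<k+1$, so ``since $k\le t-1$ this holds'' is false in precisely the critical case. The repair is your first bullet taken to its conclusion. If even a single $u\in U'$ had a red neighbour outside $W$, then all but at most one of its at least $3t$ blue neighbours would lie in $W$, which is impossible since $|W|\le 3t-3$. So no vertex is deleted, $|U'''|\ge t\ge k+1$, and your fallback via $2t$-heaviness of $E_2$-edges is not needed.

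In fact the Tutte--Berge and Hall steps then become superfluous. Every $u\in U'$ has all its red neighbours in $W$ and at least $3t-3k\ge 3$ blue neighbours outside $W$. Hence any two $a,b\in U'$ can be given distinct blue partners outside $W\cup\{a,b\}$. So if $a$ and $b$ are red neighbours of two different vertices of a cherry $C\in\cW$, you can replace $C$ by two new cherries, contradicting maximality. Therefore the red edges between each $C$ and $U'$ form a star. Any three vertices of $U'$ (and $|U'|\ge 3$) then send at most $3k\le 3t-3<3t$ red edges into $W$, a contradiction.

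This argument needs only $|U|\ge 3t$, so in fact $q(e)\le 3t-1$. Phrase it with $R(u)$ and $B(u)$, the extending vertices of $v_1u$ and $v_0u$ other than $v,v'$, and allow only cherries centred in $U$. It then yields exactly the $t$ disjoint cherries that the $B_t^3$ construction requires.
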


\begin{proof}
    Let $\cH_6'$ denote the set of hyperedges in $\cH_6$ where the subedge that belongs to $E_2$ is $2t$-heavy, and $\cH_6''$ the rest of the hyperedges in $\cH_6$. Then $|\cH_6''|$ is at most $2t-1$ times the number of $(2t-1)$-light edges in $E_2$, which is $o(n^2)$.

  Let $G'$ denote the shadow graph of $\cH_6'$. We are going to show that for sufficiently large $s$, there is no $B_s$ in $G'$. Recall that in the proof of Claim \ref{claim3.1}, we showed that there is no $B_s$ formed by $(2t+1)$-heavy edges. We do not improve this by showing that there is no $B_s$ formed by $2t$-heavy edges; we only show that there is no $B_s$ formed by $2t$-heavy edges that are contained by a hyperedge of $\cH_6'$. 
  We cannot apply Lemma \ref{lemi} directly, we need to add some simple modifications, thus we mostly repeat its proof.

  Assume that there is a $B_s$ in $G'$ with rootlet edge $vv'$. Let $u$ be a vertex that extend $vv'$ to a hyperedge of $\cH_6'$. For each page edge of the $B_s$ not incident to $u$, we pick $2t-2$ vertices distinct from $u,v,v'$ that extend them to a hyperedge of $\cH$. One of the edges $vv',uv,uv'$ is in $E_2$, for that edge we pick $2t-1$ such vertices. Then we define the auxiliary graph $G_0$ as in the proof of Lemma \ref{lemi}, with vertex set being the page vertices except for $u$ and for the $2t-1$ vertices picked for the subedge of $uvv'$ in $E_2$. We find an independent set of size $t-1$ as in the proof of Lemma \ref{lemi}. These will be page vertices. We go through the $2t-2$ page edges one by one, selecting one of the $2t-2$ vertices picked for that edge. We can choose distinct vertices this way. Afterwards, we select an extending vertex for the subedge of $uvv'$ in $E_2$. We can pick a new vertex out of the $2t-1$ vertices picked for that edge. Finally, since the remaining two edges of the book are $3t+1$-heavy, we can pick new extending vertices for them by Lemma \ref{greedy}.

  We have shown that $G'$ is $B_s$-free. Each hyperedge of $\cH_6'$ induces a triangle in $G'$, thus $h_6$ is at most the number of triangles in $G'$. By a theorem of Alon and Shikhelman \cite{alon}, an $n$-vertex $B_s$-free graph contains $o(n^2)$ triangles, completing the proof of the claim. 
\end{proof}



\begin{clm}
    $h_2=o(n^2)$.
\end{clm}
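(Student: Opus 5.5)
The plan is to follow the template of Claim~\ref{claim3.3}. First I discard the hyperedges of $\cH_2$ whose structure is not exactly the extremal one; these are $o(n^2)$ by the sharpness of the counting in Claim~\ref{claim3.2}. For the remaining hyperedges I will show that their shadow graph contains no large book, and then Alon--Shikhelman \cite{alon} gives $o(n^2)$ triangles, hence $o(n^2)$ such hyperedges.

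\emph{Cleaning step.} As observed after (\ref{equ}), all but $o(n^2)$ edges of $E_2$ are $2t$-heavy, i.e.\ lie in exactly $2t$ hyperedges. Each $(2t-1)$-light edge of $E_2$ lies in fewer than $2t$ hyperedges, so the hyperedges of $\cH_2$ containing such an edge number $o(n^2)$. Next I treat the third subedge $f$ of a hyperedge $h\in\cH_2$ (the one not among two chosen $E_2$-subedges) by cases.
\begin{itemize}[label={}, leftmargin=0pt]
\item If $f\in E_1$, then $h$ is counted both in $\cH_1$ and in $\cH_2$. Then $|\cH|\le h_1+h_2+\dots$ overcounts $h$, so the sharpness of (\ref{equ}) (exactly as in the argument that $h_4,h_5=o(n^2)$) forces $|\cH_1\cap\cH_2|=o(n^2)$.
\item If $f\in E_3$, then $h\in\cH_4$, and $h_4=o(n^2)$.
\item If $f\in E_2$, then $h$ contributes $3$ rather than $2$ to the pair count for $E_2$ in Claim~\ref{claim3.2}. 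Sharpness again forces only $o(n^2)$ such hyperedges.
\end{itemize}
So it remains to bound the set $\cH_2'$ of hyperedges $uvv'$ with $uv,uv'$ being $2t$-heavy edges of $E_2$ and $vv'\in E_4$, i.e.\ $(3t+1)$-heavy.

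\emph{No large books.} Let $G'$ be the shadow of $\cH_2'$ and suppose it contains $B_s$ with $s$ large and rootlet $vv'$. Pick a hyperedge $uvv'$ of $\cH_2'$ with $u$ a page vertex; this is possible after passing to a sub-book, since $G'$-edges come from triangles of $\cH_2'$. For each of the other page edges, which are at least $2t$-heavy as edges of $G'$, I reserve $2t-2$ extending vertices different from $u,v,v'$. For the two $E_2$-edges $uv,uv'$ I reserve $2t-1$ extending vertices each, different from $v'$ and $v$ respectively.

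As in Lemma~\ref{lemi}, I build the auxiliary graph $G_0$ on the page vertices other than $u$ and the reserved vertices. It has bounded average degree, so Caro--Wei gives an independent set of $t-1$ further page vertices. I then extend the $2t-2$ page edges greedily with distinct reserved vertices. The rootlet $vv'$ is $(3t+1)$-heavy and is extended last by Lemma~\ref{greedy}.

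\emph{Main obstacle.} The delicate point is extending $uv$ and $uv'$. Each has $2t-1$ reserved vertices, and up to $2t-2$ of them may already be used, so each may have only one free choice, and these may coincide.
\begin{itemize}[label={}, leftmargin=0pt]
\item First I would try to order the choices so that $uv$ and $uv'$ are extended before the other page edges. Then they take two distinct vertices, using $2t-1\ge 2$.
\item After that, each remaining page edge still has $2t-2$ reserved vertices against only $2t-4$ previously used ones plus the two taken by $u$'s page, which is just enough.
\item If this count fails by one, the fallback is to use the hyperedge $uvv'$ itself to cover the rootlet. That frees $vv'$, and the heaviness of $vv'$ instead supplies a spare vertex; alternatively I would exploit that $u$'s page needs only two new vertices.
\end{itemize}
Once $G'$ is $B_s$-free, Alon--Shikhelman bounds $|\cH_2'|$ by the number of triangles of $G'$, which is $o(n^2)$. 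This gives $h_2=o(n^2)$.
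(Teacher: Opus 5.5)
Your cleaning step is essentially right, but the ``no large books'' step has a genuine gap. It fails exactly at the obstacle you flag, and reordering cannot fix it. The $2t$ page edges of your $B_t$ each have only $2t-1$ or $2t-2$ usable extending vertices, and such lists can fail Hall's condition outright. If all page edges share the same $2t-1$ extending vertices, no order works.

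Your count confirms this rather than avoiding it. After $uv,uv'$ are extended, the last of the other $2t-2$ page edges faces $2+(2t-3)=2t-1$ used vertices against $2t-2$ reserved ones. The fallback also fails. $u$ is a vertex of the book, so $uvv'$ cannot supply the rootlet's extension, and the heaviness of $vv'$ gives nothing to the page edges.

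What is missing is a structural treatment of this obstruction, as in the paper. Take a nice $B_{3t+1}$ from Lemma \ref{lemi} with page set $S$, and use as lists the $\ge 2t-1$ extending vertices outside it. Suppose Hall fails for every $t$-subset of $S$. Then all page edges have one common list $\{x_1,\dots,x_{2t-1}\}$, since overlapping subsets force the lists to agree. So each $vx_i$ and $v'x_i$ lies in a hyperedge with every vertex of $S$ and is therefore $(3t+1)$-heavy. The rootlet $vv'$ with pages $x_1,\dots,x_t$ then gives $B_t^3$ by Lemma \ref{greedy}.

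There is a second gap. An arbitrary $B_s$ in the shadow $G'$ may have as rootlet one of the two $E_2$-edges of $uvv'$. That rootlet is at most $2t$-heavy and cannot be extended last by Lemma \ref{greedy}, and your argument silently omits this case.

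The paper avoids shadow books altogether by using a fact you never invoke. Once everything outside $\cH_1\cup\cH_2$ is negligible, sharpness gives $|\cH|\le t(e_1+e_2)+o(n^2)$, hence $e_3+e_4=o(n^2)$. So at most $(s-1)e_4=o(n^2)$ cleaned hyperedges have an $E_4$-edge lying in fewer than $s$ cleaned hyperedges. Every other hyperedge has its $E_4$-edge $uv$ as rootlet of a book of hyperedges $uvz_1,\dots,uvz_{s-1}$. That rootlet is heavy, and the page edges are $E_2$-edges in exactly $2t$ hyperedges. The structural argument above then shows no such hyperedge survives, with no Alon--Shikhelman step needed.

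One smaller slip: a hyperedge with two $E_2$-edges and one $E_3$-edge is not in $\cH_4$, which requires two $E_3$-edges. It is still negligible, either because $e_3=o(n^2)$ or because it is missing from the $E_3$-count in Claim \ref{claim3.2}.
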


\begin{proof}
    The proof is similar to that of Claim \ref{claim3.3}, but we start with a more complicated way of removing a negligible number of hyperedges. First, we claim that all but $o(n^2)$ edges of $E_2$ are in exactly $2t$ hyperedges of $\cH_2$. Indeed, otherwise in the proof of Claim \ref{claim3.2} the number of pairs $(e,h)$ with $e\in E_2$, $e\subset h$ is upper bounded by $2te_2-\Omega(n^2)$, which implies that (\ref{equ}) changes to $|\cH|\le t\binom{n}{2}-\Omega(n^2)+o(n^2)$ and we are done. Observe that it is the exact same argument that showed that all but $o(n^2)$ edges of $E_2$ are $2t$-heavy. However, now we also know that the number of hyperedges outside $\cH_1\cup\cH_2$ is $o(n^2)$, thus the non-negligible contribution of the edges in $E_2$ must come from hyperedges in $\cH_2$. 

    Let us delete the hyperedges from $\cH_2$ that contain an edge of $E_2$ that is not in $2t$ hyperedges of $\cH_2$, this way we deleted $o(n^2)$ hyperedges. Furthermore, we choose a sufficiently large $s$ and delete the hyperedges from $\cH_2$ that do not contain an $s$-heavy edge. We claim that we again have deleted $o(n^2)$ hyperedges. Recall that (\ref{equ}) is simplified to $|\cH_1|+|\cH_2|\le t(e_1+e_2)$, thus we have $e_3+e_4=o(n^2)$. This implies that there are $o(n^2)$ hyperedges that contain an $(s-1)$-light edge from $E_3\cup E_4$. Finally, we have to count the hyperedges in $\cH_2$ that contain three edges of $E_2$. When we count the pairs in the proof of Claim \ref{claim3.2}, $x$ such hyperedges mean that $\cH_2$ contributes $2h_2+x$ to the lower bound. Therefore, (\ref{equ}) becomes $h_1+h_2+x/2\le t\binom{n}{2}+o(n^2)$, we are done unless $x=o(n^2)$.

    Let $\cH_2'$ denote the resulting hypergraph. Now we delete the hyperedges from $\cH_2'$ where the $s$-heavy edge is not contained in at least $s$ hyperedges of $\cH_2'$. There are $o(n^2)$ $s$-heavy edges, and for each of them, we deleted at most $s-1$ hyperedges of $\cH_2'$, thus again we have deleted $o(n^2)$ hyperedges. Let $\cH_2''$ denote the resulting hypergraph.
    
    Consider an arbitrary hyperedge $uvz$ of $\cH_2''$ and let $uv$ be the $s$-heavy edge. Then we have vertices $z_1,\dots, z_{s-1}$ that each extend $uv$ to a hyperedge in $\cH_2'$. We let $w(e)=2t-1$ for the page edges and $w(e)=3t+1$ for the rootlet edge. Then by Lemma \ref{lemi}, we find a nice copy of $B_{3t+1}$, let $S$ be the set of page vertices.
    We try to build a $B_t^3$ with any set of $t$ vertices from $S$ as page vertices and $u,v$ as rootlet vertices. First we try to pick distinct extending vertices to the page edges greedily. If we can, then we can choose an extending vertex for the rootlet edge, since it is $s$-heavy and we are done. If we cannot, then the $2t$ page edges each have the same $2t-1$ extending vertices $x_1,\dots, x_{2t-1}$. This holds for any $t$-subset of $S$, thus for any $i\le 2t-1$, the edges $ux_i$ and $vx_i$ form hyperedges with any vertex of $S$. In particular, the edges $ux_i$ and $vx_i$ are $(3t+1)$-heavy. This means that each edge in the $B_t$ with rootlet vertices $uv$ and page vertices $x_i$ is $(3t+1)$-heavy, thus we find a copy of $B_{2t-1}^3$, a contradiction.

    We obtained that there is no hyperedge in $\cH_2''$, completing the proof.
\end{proof}

By the above claims, we can assume that all but $o(n^2)$ hyperedges are in $\cH_1$. Moreover, (\ref{equ}) turns to $|\cH|\le t|E_1|+o(n^2)$, hence $|E_1|=\binom{n}{2}-o(n^2)$. Let $E_1'$ denote the edges that are in exactly $t$ hyperedges of $\cH_1$ such that the $t$ hyperedges each contain two $5t$-heavy edges. Let $\cH_1''$ denote the hyperedges that contain an edge of $E_1'$ and two $5t$-heavy edges.

\begin{clm}
    $|E_1'|=\binom{n}{2}-o(n^2)$ and $\cH_1''$ contains all but $o(n^2)$ hyperedges of $\cH$.
\end{clm}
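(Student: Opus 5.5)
We derive the claim from the tightness of (\ref{equ}), in the same spirit as the previous claims. We already know that all but $o(n^2)$ hyperedges lie in $\cH_1$, that $|\cH|\le t e_1+o(n^2)$, and that $|\cH|\ge t\binom{n-t}{2}=t\binom{n}{2}-o(n^2)$. Hence $e_1=\binom n2-o(n^2)$, and in the count of pairs $(e,h)$ with $e\in E_1$, $h\in\cH_1$, $e\subset h$, both bounds are sharp up to $o(n^2)$. The upper bound is $te_1$; the lower bound is $h_1$ plus the number of hyperedges of $\cH_1$ containing at least two $E_1$-edges.

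\textbf{Step 1.} Three facts follow from this sharpness.
\begin{enumerate}[label=(\roman*)]
\item All but $o(n^2)$ edges of $E_1$ lie in exactly $t$ hyperedges of $\cH_1$. Otherwise the upper bound drops to $te_1-\Omega(n^2)$, giving $|\cH|\le t\binom n2-\Omega(n^2)$, a contradiction.
\item Only $o(n^2)$ hyperedges of $\cH_1$ contain two or more edges of $E_1$, since such hyperedges are counted at least twice in the lower bound.
\item Hence all but $o(n^2)$ hyperedges of $\cH$ contain exactly one $t$-light edge and two $(t+1)$-heavy edges.
\end{enumerate}

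\textbf{Step 2.} The main task is to upgrade ``$(t+1)$-heavy'' to ``$5t$-heavy''. Since $t(e_2+e_3+e_4)=o(n^2)$ from the sharpness of Claim~\ref{claim3.2}, the number of edges that are $(t+1)$-heavy is $o(n^2)$. Let $L$ be the set of such edges that are $(5t-1)$-light. Each edge of $L$ lies in fewer than $5t$ hyperedges, so at most $5t|L|=o(n^2)$ hyperedges contain an edge of $L$.

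Now discard the $o(n^2)$ hyperedges that either contain an edge of $L$ or contain two $E_1$-edges. Every remaining hyperedge contains exactly one $E_1$-edge and two $5t$-heavy edges.

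\textbf{Step 3.} Call an edge of $E_1$ \emph{bad} if it is not in exactly $t$ hyperedges of $\cH_1$, or if one of its hyperedges was discarded in Step~2. By Step~1(i) there are $o(n^2)$ edges of the first kind. Each discarded hyperedge contains at most three edges, so there are $o(n^2)$ edges of the second kind. Every edge of $E_1$ that is not bad lies in $E_1'$, so $|E_1'|\ge e_1-o(n^2)=\binom n2-o(n^2)$.

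Finally, any hyperedge that survives Step~2 and whose $E_1$-edge is not bad belongs to $\cH_1''$. A hyperedge whose $E_1$-edge is bad is either among the $o(n^2)$ discarded ones, or its $E_1$-edge is among the $o(n^2)$ bad edges. In the latter case there are at most $t$ such hyperedges per bad edge, giving at most $t\cdot o(n^2)$ in total. Therefore $\cH_1''$ contains all but $o(n^2)$ hyperedges.

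\textbf{Main obstacle.} The delicate point is Step~2: the $(t+1)$-heavy but $(5t-1)$-light edges must be few, and this follows only from $e_2+e_3+e_4=o(n^2)$. If that bound were unavailable, one would instead argue as in the proof of Claim~\ref{claim3.2}: hyperedges containing an edge of $E_2$ or $E_3$ contribute a weighted count exceeding $1$ per hyperedge. This would force those hyperedges to number $o(n^2)$, and a similar slack argument would then handle the $4t$-light part of $E_4$.
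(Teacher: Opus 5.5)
Your proof is correct. It uses the same two ingredients as the paper but finishes differently. The first ingredient is the tightness of (\ref{equ}), which lets you discard the $o(n^2)$ hyperedges of $\cH_1$ containing two $E_1$-edges. The second is $e_2+e_3+e_4=o(n^2)$, which lets you discard the $o(n^2)$ hyperedges through a $(t+1)$-heavy, $(5t-1)$-light edge.

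From there the paper runs an iterative peeling: it repeatedly deletes hyperedges through an edge lying in at most $t-1$ hyperedges of the current hypergraph. It then bounds the number $x$ of peeled edges by the final count $t\bigl(\binom n2-x\bigr)+(t-1)x+o(n^2)$.

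You observe instead that membership in $E_1'$ refers to heaviness in $\cH$ itself, so no iteration is needed. An $E_1$-edge fails to lie in $E_1'$ only in two ways:
\begin{itemize}
\item it lies in fewer than $t$ hyperedges, which tightness restricts to $o(n^2)$ edges (your Step 1(i));
\item it lies in one of the $o(n^2)$ discarded hyperedges, each of which spoils at most three edges.
\end{itemize}
This one-shot version is shorter and avoids the paper's somewhat loose bookkeeping of ``deleted edges'' (the paper also writes $x$ where $y$ is meant).

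One small fix: in Step 2, ``every remaining hyperedge'' should read ``every remaining hyperedge of $\cH_1$''. Correspondingly, the $o(n^2)$ hyperedges outside $\cH_1$ should be listed explicitly in your final exceptional count, alongside the discarded ones and those through bad edges.
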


\begin{proof} 
Let us first delete from $\cH_1$ the hyperedges that contain at least two edges from $E_1$ to obtain $\cH_1'$. When we count the pairs in the proof of Claim \ref{claim3.2}, $y$ such hyperedges mean that $\cH_1$ contributes $h_1+y$ to the lower bound. Therefore, (\ref{equ}) becomes $h_1+h_2+y\le t\binom{n}{2}+o(n^2)$, we are done unless $x=o(n^2)$.

Let us delete from $\cH_1'$ the hyperedges that contain a $(5t-1)$-light, $(t+1)$-heavy edge. We can count the deleted hyperedges by picking such an edge $o(n^2)$ ways, and hyperedges containing them at most $5t-1$ ways. Then we delete the hyperedges that contain an edge that is in at most $(t-1)$ hyperedges of the current hypergraph. 
    We repeat this as long as we can. We claim that the resulting hypergraph is $\cH_1''$. Indeed, in the resulting hypergraph, each hyperedge contains two $5t$-heavy edges and an edge of $E_1$, but that edge is in at least $t$ hyperedges of the resulting hypergraph.

In the end, we have deleted $x$ edges and at most $(t-1)x$ hyperedges from $\cH_1'$. Then we have at most $\binom{n}{2}-x$ edges remaining (each in $E_1'$), and at most $t(\binom{n}{2}-x)$ hyperedges remaining. Then the total number of hyperedges in $\cH$ is at most $t(\binom{n}{2}-x)+(t-1)x+o(n^2)=t\binom{n}{2}-x+o(n^2)$, thus $x=o(n^2)$ and we are done.
\end{proof}

Consider a triangle $uvw$ in $E_1'$ and let $a_1,\dots a_t$ be the extending vertices of $uv$. 

If $uw$ has extending vertex $b$ and $vw$ has extending vertex $b'$ such that $b,b'\not\in A:=\{a_1,\dots, a_t\}$ and $b\neq b'$, then we have a $B_t^3$ in $\cH$ with rootlet vertices $u,v$ and page vertices $w,a_1,\dots, a_{t-1}$. The edge $uv$ is extended by $a_t$, the edge $uw$ is extended by $b$ and the edge $vw$ is extended by $b'$ to a hyperedge. The rest of the edges are $5t$-heavy, thus we can complete the extending by Lemma \ref{greedy}, a contradiction.

If $b=b'$ and $b, b' \notin A$, then the edges $ub$ and $vb$ are $5t$-heavy. Indeed, they both form a hyperedge with $w$, and $uw$ and $vw$ are in $E_1'$. By the definition of $E_1'$, the other edges of such hyperedges are $5t$-heavy. We have a $B_t^3$ in $\cH$ with rootlet vertices $u,v$ and page vertices $b,a_1,\dots, a_{t-1}$. The edge $uv$ is extended by $a_t$, and the rest of the edges are $5t$-heavy, thus we can complete the extending by Lemma \ref{greedy}, a contradiction. 

We have obtained that at least one of the other edges of the triangle $uvw$, say $uw$ has the same set $A=\{a_1,\dots, a_t\}$ of extending vertices. But the same holds if we start with the edge $vw$, thus the extending vertices of $vw$ are also $A$.

Let $U$ denote the set of vertices that are incident to $n-o(n)$ edges of $E_1'$, then clearly $|U|=n-o(n)$. Then there is an edge $uv$ inside $U$, and there are $n-o(n)$ common neighbors of $u$ and $v$ in $E_1'$. This means that all of the edges joining $u$ or $v$ to these common neighbors have the same set $A$ as their extending vertices. In particular, all of those vertices have that they are joined to each vertex of $A$ by a $5t$-heavy edge.

Let $U'$ denote the set of vertices $u$ such that $ua_i$ is $5t$-heavy for each $i$. By the above, $|U'|=n-o(n)$. Let $W=V(\cH)\setminus (A\cup U')$ and $m:=|W\cup A|$. Consider now $u,u'\in U'$. We claim that all the extending vertices of $uu'$ are in $A$ (note that such an edge is not necessarily in $E_1'$). Indeed, if $b\not\in A$ is an extending vertex, then we have a $B_t^3$ in $\cH$ with rootlet vertices $u,v$ and page vertices from $A$. The extending vertex of $uu'$ is $b$, and the rest of the edges are $5t$-heavy, thus we can complete the extending by Lemma \ref{greedy}, a contradiction.

Let us consider now the hyperedges of $\cH$ according to the location of their vertices in the partition to $U',W,A$. There are at most $t\binom{n-m}{2}$ hyperedges with at least 2 vertices in $U'$. There are at most $t\binom{m}{2}+o(m^2)=o(nm)$ hyperedges inside $A\cup W$ by the asymptotic result (\ref{equ}). It is left to consider the hyperedges with one vertex inside $U'$. 

Assume that there is a hyperedge with two vertices in $A$, say $a_1$ and $a_2$ and let $v$ be its third vertex. Recall that $a_1$ and $a_2$ are joined by $5t$-heavy edges to each vertex of $U'$. We pick $t$ arbitrary vertices from $U'$ different from $v$ as page vertices and $a_1,a_2$ as rootlet vertices of a $B_t$. Then the rootlet edge is extended by $v$, and the other edges are $5t$-heavy, thus we can complete the extending by Lemma \ref{greedy}, a contradiction.

For any vertex $w\in W$, there is a vertex $a\in A$ such that $wa$ is $(5t-1)$-light. The other vertices of $A$ are in at most $n$ hyperedges together with $w$, thus there are at most $(t-1)(m-t)n+(5t-1)m$ hyperedges with one vertex in each of $U',W,A$.

Finally, let $\cG$ denote the hyperedges $uw_1w_2$ with $u\in U'$, $w_1,w_2\in W$ and let $\cG'$ denote the hyperedges in $\cG$ such that both $uw_1$ and $uw_2$ are in exactly one hyperedge of $\cG$. Then $|\cG'|\le n(m-t)/2$, since they contain two edges between $U'$ and $W$, and each edge is counted at most once. Let $\cG''$ denote the rest of the hyperedges of $\cG$.

Let $w\in W$ and let $U(w)$ denote the set of vertices $u\in U'$ that have at least two hyperedges that contain $u,w$ and a third vertex from $W$. If $u_1,u_2\in U(w)$, then we can pick distinct vertices $w_1,w_2\in W$ such that $u_1w_1w$ and $u_2w_2w$ are both hyperedges in $\cH$. If there is any hyperedge containing $u_1u_2$, then the third vertex of that hyperedge is in $A$, say $a_t$. Then we find a $B_t^3$ in $\cH$ with rootlet vertices $u_1,u_2$ and page vertices $w,a_1,\dots, a_{t-1}$. The extending vertex for $u_1u_2$ is $a_t$, and $w_i$ for $u_iw$. The rest of the edges are $5t$-heavy, thus we can complete the extending by Lemma \ref{greedy}, a contradiction.

This implies that $U(w)$ forms an independent set in $E_1'$, thus $|U(w)|=o(n)$. 
Let $u\in U(w)$. If $uw$ is $3t$-light, then we delete the hyperedges containing $uw$ from $\cG''$, this way we delete $o(nm)$ hyperedges. Let $\cG'''$ denote the resulting hypergraph. Let $w_1,w_2\in W$ and assume that there are $3t+1$ hyperedges in $\cG'''$ containing them. Then for any $t$ of those hyperedges, their shadow forms a $B_t$ where each edge is $(3t+1)$-heavy. By Lemma \ref{greedy}, we find a $B_t^3$ in $\cG'''$, a contradiction. Therefore, there are at most $3t\binom{m}{2}$ hyperedges in $\cG'''$.

Now we are ready to count the total number of hyperedges. By adding up the bounds obtained above, the number of hyperedges is at most

\begin{equation*}
    t\binom{n-m}{2}+o(nm)+(t-1)(m-t)n+(5t-1)m+n(m-t)/2+o(nm)+3t\binom{m}{2}.
\end{equation*}

If $m>t$ and $n$ is sufficiently large, then this is less than $t\binom{n-t}{2}$, completing the proof.

\section{Concluding remarks}\label{sec4}

A natural question is what happens for larger uniformity. We cannot answer this question, but we can determine the order of magnitude. More generally, let $B_{t,k,s}$ denote the graph that consists of $t$ copies of $K_k$ sharing the same set of $s$ vertices. Note that the proposition below generalizes Proposition 4.1 in \cite{tfan}.

\begin{proposition} Let $k>2$ and $t>1$. Then
    \begin{displaymath}
\ex_r(n,B_{t,k,s}^r)=
\left\{ \begin{array}{l l}
\Theta(n^{r-1}) & \textrm{if\/ $k\le r$},\\
(1+o(1))\binom{k-1}{r}\left(\frac{n}{k-1}\right)^r & \textrm{if\/ $k>r$}.\\
\end{array}
\right.
\end{displaymath}
\end{proposition}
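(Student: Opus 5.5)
Presumably $s<k$. The two regimes get different arguments. In both, the key structural fact is that $B_{t,k,s}$ is $k$-partite: it has chromatic number $k$, so $K_k\subseteq B_{t,k,s}\subseteq K_{k}(T)$ for some blow-up size $T=T(t,k)$. Here $K_k(T)$ denotes the complete $k$-partite graph with parts of size $T$.

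\emph{The case $k>r$.} For the lower bound, take the complete balanced $(k-1)$-partite $r$-graph: all $r$-sets meeting $r$ distinct parts of an equipartition into $k-1$ classes. It has $(1+o(1))\binom{k-1}{r}(n/(k-1))^r$ hyperedges. It contains no $K_k^r$, since the $k$ core vertices of a $K_k^r$ would include two vertices in the same class, and the hyperedge expanding that pair would have two vertices in one class. As $K_k\subseteq B_{t,k,s}$, the construction is also $B_{t,k,s}^r$-free.

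For the upper bound, I will use the known fact that the expansion of a blow-up is degenerate relative to the expansion of the clique. By Mubayi's result \cite{Mu}, $\ex_r(n,K_k^r)=(1+o(1))\binom{k-1}{r}(n/(k-1))^r$. The plan is to get the same asymptotics for $K_k(T)^r$ by a supersaturation and blow-up argument (Erdős–Simonovits style). A hypergraph with $\varepsilon n^r$ more edges than this bound contains $\Omega(n^{|V(K_k^r)|})$ copies of $K_k^r$. Then, by the hypergraph version of Erdős's theorem on blow-ups, it contains a blow-up of $K_k^r$, and this blow-up contains $K_k(T)^r$, hence $B_{t,k,s}^r$. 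The hardest point is this step: one must check that the supersaturation is uniform. If I want to avoid it, I would instead cite the general theorem that expansions $F^r$ with $\chi(F)=k>r$ satisfy $\ex_r(n,F^r)=(1+o(1))\binom{k-1}{r}(n/(k-1))^r$ (Mubayi / Pikhurko), applied to $F=B_{t,k,s}$.

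\emph{The case $k\le r$.} For the lower bound, take all $r$-sets containing a fixed vertex. This gives $\binom{n-1}{r-1}=\Theta(n^{r-1})$ hyperedges. A copy of $B_{t,k,s}^r$ in it would give $t\ge 2$ vertex-disjoint edges among its expanded edges: since $k\ge3$, take two edges of different pages avoiding the common core. Any two disjoint edges cannot both contain the fixed vertex, so the construction is $B_{t,k,s}^r$-free.

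For the upper bound $O(n^{r-1})$, use $B_{t,k,s}\subseteq K_k(T)$. It suffices to show $\ex_r(n,F^r)=O(n^{r-1})$ for any $k$-partite $F$ with $k\le r$. I would argue via the "kernel/shadow" method, as in the proof of Proposition 4.1 of \cite{tfan}, which this statement generalizes; I plan to repeat that argument with $B_{t,k,s}$ in place of the fan.

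The reduction goes in two steps. First, repeatedly delete hyperedges containing a pair of degree less than $C=|V(F)|+(r-2)|E(F)|$. This removes at most $C\binom n2=O(n^2)\le O(n^{r-1})$ hyperedges when $r\ge3$. In the remainder every shadow pair is $C$-heavy. The natural analogue of Lemma~\ref{greedy} then says that any copy of $F$ in the shadow lifts greedily to $F^r$: when choosing the $r-2$ new vertices for an edge, the at least $C$ hyperedges through that pair let us avoid the previously used vertices. Since $r-2$ new vertices are needed per edge, it is cleaner to delete by $(r-1)$-sets of small degree. That costs $O(n^{r-1})$ and yields the stronger extension property needed for the greedy lift.

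Second, if more than $c\,n^{r-1}$ hyperedges remain, the shadow of the remainder contains $K_k(T)$. For this, I plan to show that the remainder contains a $K_k(T)$ inside a single hyperedge-rich structure. Concretely, $\Omega(n^{r-1})$ hyperedges give, by the Erdős box theorem, a complete $r$-partite $r$-graph $K^{(r)}(T,\dots,T)$. Its shadow contains $K_r(T)\supseteq K_k(T)$ because $k\le r$. Then lift greedily and conclude.
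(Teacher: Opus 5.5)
\textbf{Verdict: there is a genuine gap in the upper bound for $k\le r$.} Your lower bounds and your treatment of $k>r$ are fine and agree with the paper, which simply cites the known asymptotics for expansions of graphs of chromatic number $k>r$.

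One small slip in the lower bound: if $s=k-1$, every page edge meets the core. In that case take $ux_1$ and $vx_2$, with $u\neq v$ core vertices and $x_1,x_2$ in different pages, as the two disjoint edges.

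The problem is the upper bound for $k\le r$. Passing from $B_{t,k,s}$ to the blow-up $K_k(T)$ (necessarily $T\ge t\ge 2$) discards exactly the structure that makes $O(n^{r-1})$ true. The statement you then aim for is false whenever $k=r$. Here is the case $r=3$; adding further independent classes handles larger $r$.
\begin{itemize}
\item Let $H$ be a $C_4$-free graph on $n/2$ vertices in which every vertex has degree $\Theta(\sqrt n)$, for example a polarity graph.
\item Join $H$ completely to an independent set $I$ of size $n/2$, and call the result $G$.
\item $G$ has no $K_{2,2,2}$: the set $I$ meets at most one class of it, so the other two classes would span a $C_4$ in $H$.
\item The triangles of $G$ form a $3$-graph with $\Theta(n^{5/2})$ hyperedges. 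It is $K_{2,2,2}^3$-free, because the core of an expansion lies in the shadow, and the shadow is contained in $G$.
\item Every shadow pair lies in $\Omega(\sqrt n)$ hyperedges, so your cleaning deletes nothing. Yet the shadow contains no $K_3(T)$.
\end{itemize}
Hence $\ex_3(n,K_3(T)^3)=\Omega(n^{5/2})$. This already happens in the paper's main case $B_t=B_{t,3,2}\subseteq K_3(t)$, where the true order is $\Theta(n^2)$.

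The step that was supposed to produce $K^{(r)}(T,\dots,T)$ is also a misquotation. Erd\H{o}s' theorem needs about $n^{r-1/T^{r-1}}$ hyperedges, far more than $cn^{r-1}$.

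The missing idea is to exploit the sunflower shape of $B_{t,k,s}$ rather than its chromatic number.
\begin{itemize}
\item Suppose $t$ hyperedges form a sunflower whose kernel has exactly $s$ vertices. Since $r\ge k$, choosing $k-s$ vertices in each petal places a copy of $B_{t,k,s}$ in their shadow.
\item After your $(r-1)$-set cleaning, this copy lifts greedily to $B_{t,k,s}^r$.
\item So it suffices to know that an $r$-graph with no $t$-sunflower with an $s$-element kernel has $O(n^{r-1})$ edges.
\item That bound follows from F\"uredi's delta-system method. Pass to a positive-fraction subfamily in which every pairwise intersection is the kernel of a $t$-sunflower. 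Then apply the Ray-Chaudhuri--Wilson bound with $s$ excluded from the allowed intersection sizes.
\end{itemize}

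The paper takes a parallel route. Gerbner's bound $\ex_r(n,F^r)=O(n^{r-1})+\ex(n,K_r,F)$ reduces the problem to counting $r$-cliques in a $B_{t,k,s}$-free graph. The family of $r$-cliques is then bounded by the Grolmusz--Sudakov theorem on $t$-wise intersections.

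If you follow that route, note that an $s$-element $t$-wise intersection alone does not produce $B_{t,k,s}$. For example, the triangles $abc$, $abd$, $acd$ of $K_4$ meet exactly in $\{a\}$, yet $K_4$ does not contain $B_{3,3,1}$. The sunflower reduction above is what makes an intersection-theorem argument rigorous.
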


We will use a connection to \textit{generalized Tur\'an numbers}. Let $\ex(n,H,F)$ denote the largest number of copies of $H$ in $n$-vertex $F$-free graphs. The systematic study of this topic was initiated by Alon and Shikhelman \cite{alon}, see \cite{GePa} for a survey. Gerbner \cite{Ge1} proved that for any $F$ and $r$,  we have $\ex_r(n,F^r)=O(n^{r-1})+\ex(n,K_r,F)$.

\begin{proof}[\bf Proof] The case $k>r$ is a theorem in \cite{MuVe}, see \cite{pttw} for a proof (it also easily follows from the above mentioned result of Gerbner combined with a theorem of Alon and Shikhelman \cite{alon}).
In the case $k\le r$, the lower bound is given by taking all the hyperedges containing a fixed element. Indeed, this hypergraph does not contain two independent hyperedges, while $B_{t,k,s}$ does.

    Let us continue with the upper bound in the case $k\le r$. By the theorem of Gerbner mentioned above, it is enough to prove that $\ex(n,K_r,B_{t,k,s})=O(n^{r-1})$. There are several papers proving bounds on $\ex(n,K_r,B_{2,k,s})$, see e.g. \cite{gp,zz}, but we are not aware of any results for general $t$, except for the cases $s\le 2$, see \cite{zcggyh,gerb,ger}. However, it is easy to prove the bound $O(n^{r-1})$.
    
    Let $G$ be an $n$-vertex $B_{t,k,s}$-free graph and let $\cH$ denote the hypergraph on the same vertex set that has the $r$-cliques of $G$ as hyperedges. It is easy to see that $t$ hyperedges of $\cH$ cannot share exactly $s$ vertices. In other words, the $t$-wise intersections can have sizes $0,1,\dots, s-1, s+1,\dots, r-1$. That is $r-1$ different sizes, thus by a theorem of Grolmusz and Sudakov \cite{gs}, $|\cH|\le (t-1)\sum_{i=1}^{r-1}\binom{n}{i}=O(n^{r-1})$. This implies that $\ex(n,K_r,B_{t,k,s})=O(n^{r-1})$, completing the proof.
\end{proof}


\bigskip
\textbf{Funding}: 
The research of Cheng is supported by the National Natural Science Foundation of China (Nos. 12131013 and 12471334), Shaanxi Fundamental Science Research Project for Mathematics and Physics (No. 22JSZ009) and the China Scholarship Council (No. 202406290241). 

The research of Gerbner is supported by the J\'anos Bolyai research scholarship of the Hungarian Academy of Sciences.

\bigskip
\textbf{Statement of AI use}. We tried to use ChatGPT 5.4 to obtain a proof of Lemma 2.1. We gave it the statement and it gave us multiple incorrect proofs, thus we ended up proving the lemma ourselves. We could use from those failed proofs the basic idea of defining $\cW$ and then using the vertices of $U'$ as centers.

\end{document}